\numberwithin{figure}{section}
\newtheorem{theorem}{Theorem}[section]
\newtheorem{lemma}[theorem]{Lemma}
\newcommand{\degrees}{\degree}
\tikzset{
	c/.style={every coordinate/.try}
}
\begin{document}

\title{An example for a nontrivial irreducible geodesic net in the plane}

\author{Fabian Parsch}
\address{Fabian Parsch, Department of Mathematics, University of Toronto, 40 St. George Street, Toronto, ON M5S 2E4, Canada}
\email{fparsch@math.toronto.edu}

\begin{abstract}
	We construct a geodesic net in the plane with four unbalanced (boundary) vertices that has $16$ balanced vertices and does not contain proper geodesic subnets. This is the first example of an irreducible geodesic net in the Euclidean plane with $4$ boundary vertices that is not a tree.
\end{abstract}

\maketitle

\section{Introduction}

Geodesic nets in Riemannian manifolds are critical points (but not necessarily local minima) of the length functional on spaces of embedded graphs or multigraphs, where some vertices must be mapped to prescribed vertices that are called {\it boundary} or {\it unbalanced}.  As such they can be regarded as generalizations of geodesics (that arise in the case of two boundary points) as well as $1$-dimensional analogs of minimal surfaces spanning a given contour. Surprisingly, very little is known about the classification of geodesic nets even in the case when the ambient Riemannian manifold is just the Euclidean plane. In this case the geodesic nets are simply embedded graphs so that (i) each edge is a straight line segment; (ii) Some of the edges are mapped to prescribed points (\enquote{unbalanced vertices}) and (iii) At each of the remaining vertices $v$ (\enquote{balanced vertices}) the following balancing condition holds: The sum of all unit vectors directed along all edges from $v$ to the opposite end of the edge is equal to the zero vector. (This characterization holds also in the more general case of ambient Riemannian manifolds with the only distinction that edges are supposed to be geodesics.)

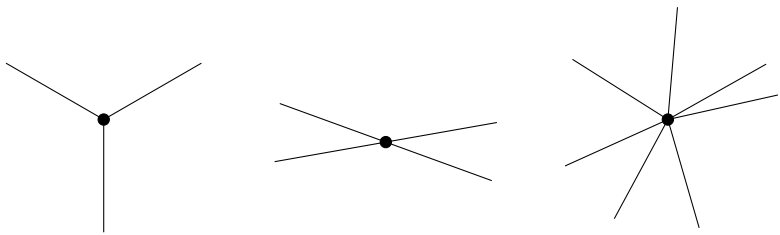
\begin{figure}
	\centering
	\begin{tikzpicture}[scale=1.5]
		\draw (-1,0) -- +(30:1);
		\draw (-1,0) -- +(150:1);
		\draw (-1,0) -- +(270:1);
		\draw [fill=black] (-1,0) circle [radius=0.05];
		
		\draw (1.5,-0.2) -- +(10:1);
		\draw (1.5,-0.2) -- +(-20:1);
		\draw (1.5,-0.2) -- +(160:1);
		\draw (1.5,-0.2) -- +(190:1);
		\draw [fill=black] (1.5,-0.2) circle [radius=0.05];
		
		\draw (4,0) -- (4.085159600262456,0.9963673230707331);
		\draw (4,0) -- (3.154511096969029,0.5339929913879817);
		\draw (4,0) -- (3.088735454976287,-0.4118214770780241);
		\draw (4,0) -- (4.975942150302623,0.2180296293229263);
		\draw (4,0) -- (3.5248510852651163,-0.8799053976571926);
		\draw (4,0) -- (4.2773364928491535,-0.9607728502274256);
		\draw (4,0) -- (4.870934923071531,0.49139837176611095);
		\draw [fill=black] (4,0) circle [radius=0.05];
	\end{tikzpicture}
	\caption{Examples for balanced vertices of degree $3$, $4$ and $7$. }
	\label{fig:balancedvertices}
\end{figure}

Note that in the present paper we are not allowing integer multiplicities of edges (or, equivalently, we assume that each edge has multiplicity $1$). As one can add and remove vertices of degree $2$ inside each edge at will, the role of such vertices in the classification is clear, and we will be assuming that {\bf all balanced vertices have degree greater than or equal to $3$.} Also, as one can add or remove straight line edges between unbalanced vertices, we will consider only geodesic nets without such edges. Finally, we are going to consider only connected geodesic nets.

As geodesics nets in the plane are obviously contained in the convex hull of the unbalanced point, the classification really starts from the case of $3$ balanced vertices. The first non-trivial example arises when three unbalanced points $A_1$, $A_2$, $A_3$ form a triangle with all angles less than $120\degrees$. In this case there exists the unique point $F$ called the Fermat point of the triangle $A_1A_2A_3$, such that all angles $A_iFA_j$, $i\not= j$, are equal to $120\degrees$. This condition implies that $F$ is the balanced vertex in the geodesic net formed by the three straight line segments connecting $F$ with $A_i$ for $i=1,2,3$.

In \cite{Parsch:2018aa} we prove the theorem asserting that this is the only possible example of a geodesic net with three unbalanced vertices in the Euclidean plane (and, more generally, any Riemannian manifold endowed with a non-positive curved Riemannian metric). This theorem is surprisingly difficult to prove. It can be stated as follows:

\begin{theorem}[Geodesic nets with 3 unbalanced vertices, \cite{Parsch:2018aa}]\label{thm:three}
	Each geodesic net with 3 unbalanced vertices (of arbitrary degree) on the plane endowed with a Riemannian metric of non-positive curvature has exactly one balanced vertex.
\end{theorem}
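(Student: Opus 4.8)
The plan is to reduce the statement to the purely combinatorial claim that the net is a \emph{tree}, observe that this reduction settles the theorem instantly, and concentrate the real work on establishing that there are no cycles.

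First I would localize the balanced vertices. Since the metric is non-positively curved, the net lies in the convex hull $T$ of $A_1,A_2,A_3$, a compact convex region whose extreme points are among the $A_i$ (every edge is a geodesic joining two points of the convex set $T$, hence stays in $T$). No balanced vertex can lie on $\partial T$: if $v$ lay on a supporting geodesic $\gamma$ with $T$ on one side, then the convex function $f$ given by the distance to $\gamma$ satisfies $D_{w_i}f(v)\ge 0$ along every edge direction $w_i$, while balancing gives $\sum_i D_{w_i}f(v)=\langle\nabla f(v),\sum_i w_i\rangle=\langle\nabla f(v),0\rangle=0$; hence every $D_{w_i}f(v)=0$, forcing all edges tangent to $\gamma$, impossible for $\deg v\ge 3$. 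The same linearity of first-order derivatives, applied to a strictly convex function such as $x\mapsto d(x,p)^2$, shows a balanced vertex is never a maximum of such a function over the net, for otherwise each $D_{w_i}f\le 0$ would be forced to vanish and strict convexity would make $f$ increase along every edge. This maximum principle is my main geometric tool. I would also record the angle condition: at a balanced vertex the unit edge-vectors sum to zero, so (as $\deg\ge 3$) they cannot lie in a closed half-plane, i.e.\ consecutive edges subtend angle $<\pi$; together with Gauss--Bonnet and $K\le 0$ this makes every bounded complementary region a geodesic $k$-gon with angle sum at most $(k-2)\pi$, and the corners of $T$ satisfy $\alpha_1+\alpha_2+\alpha_3\le\pi$.

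Granting that the net is a tree, the theorem follows by a short count in which the hypothesis of \textbf{three} boundary vertices is essential. In a tree $\sum_v(\deg v-2)=-2$; balanced vertices have $\deg\ge 3$ and so cannot be leaves, whence all leaves lie among $A_1,A_2,A_3$ and their number $\ell$ is at most $3$. Writing $n$ for the number of balanced vertices, $-2=\sum_v(\deg v-2)\ge n-\ell$, so $n\le \ell-2\le 1$; connectedness forces $n\ge 1$, hence $n=1$, and the unique balanced vertex has degree $3$ and is joined to the three $A_i$, i.e.\ the Fermat configuration.

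The decisive and hardest step is ruling out cycles, and I expect this to be where the difficulty concentrates. The naive approaches do not close: applying Gauss--Bonnet to an innermost face, or summing exterior angles over the boundary of the filled region $\Omega$, produces only consistent inequalities. Balancing does force every balanced vertex on $\partial\Omega$ to be \emph{reflex} (interior angle $>\pi$, else all its edges would sit in a half-plane), so these vertices contribute negative exterior angle while only the $\le 3$ boundary vertices contribute positively; but combining this with Euler's formula and $\int_\Omega K\le 0$ still yields no contradiction, since the boundary angles can absorb the deficit. Thus cycles must be excluded by using the balancing condition \emph{quantitatively} rather than through angle sums alone --- for example by a peeling argument that removes outer edges while tracking the balancing, or by exploiting the unit-tension equilibrium interpretation of balancing together with the maximum principle above applied to a strictly convex function centred inside an enclosed region. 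Turning one of these ideas into a genuine contradiction, uniformly in the (arbitrary) degrees of the balanced vertices, is the main obstacle and the reason the result is \enquote{surprisingly difficult}.
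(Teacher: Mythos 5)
Your proposal does not prove the theorem; it reduces it to the claim that the net contains no cycles, and then explicitly leaves that claim open, offering only candidate strategies (\enquote{a peeling argument}, \enquote{exploiting the unit-tension equilibrium interpretation}) that you acknowledge you cannot turn into a contradiction. That missing step is not a technical detail --- it is essentially the entire content of the theorem. What you have actually established is the easy implication: \emph{if} a geodesic net with three unbalanced vertices is a tree, then it has exactly one balanced vertex (your count $-2=\sum_v(\deg v-2)\ge n-\ell$ is fine, granting the paper's conventions that balanced vertices have degree at least $3$ and that there are no edges joining unbalanced vertices, which is what forces $n\ge 1$). Your own discussion confirms why the gap is real: Gauss--Bonnet plus Euler's formula and the reflex-angle observation at balanced boundary vertices yield only consistent inequalities, because the angles at the three unbalanced corners can absorb the deficit. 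So no argument in your text excludes, say, a cycle of balanced vertices enclosing a region, which is exactly the configuration the theorem forbids (and which, with \emph{four} unbalanced vertices, this paper shows can actually occur --- so any correct proof must use the hypothesis of three boundary vertices in the acyclicity step itself, not only in the final count).

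For comparison: the present paper does not prove this theorem at all; it is imported from \cite{Parsch:2018aa}, and the author remarks that it is \enquote{surprisingly difficult to prove}. The difficulty lives precisely where you stopped. Two smaller points in the part you did write: the function \enquote{distance to a supporting geodesic $\gamma$} is not differentiable at points of $\gamma$, so the identity $\sum_i D_{w_i}f(v)=\langle\nabla f(v),\sum_i w_i\rangle$ is not literally available; you should instead argue with the inward normal component of each edge direction (all nonnegative, summing to zero, hence all zero), which is what your conclusion needs. And the claim that no balanced vertex lies on $\partial T$ is fine, but it plays no role afterwards, since your tree count never uses it; as written, the geometric \enquote{maximum principle} machinery is scaffolding around a hole rather than a proof.
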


What happens in the next case, when the number of unbalanced vertices is equal to $4$? Consider first the geodesic net in figure \ref{fig:fournet} (that we first constructed in \cite{Parsch:2018aa}). At first sight, it seems rather complicated. However, the figure shows that it is essentially an \enquote{overlay} of several geodesic nets, each of them being a tree: four geodesic nets with $3$ unbalanced vertices and the Fermat point in the middle, and three well-known tree-shaped geodesic nets with $4$ unbalanced vertices and $1$ or $2$ balanced vertices (plus the balanced vertices that appear as points of the intersections of edges of these elementary geodesic nets). With this example in mind, it is of interest to define {\it irreducible} geodesic nets as geodesic nets without proper geodesic subnets. (Here $G_1$ is a proper gedesic subnet of $G$ if (i) The set of balanced (resp. unbalanced) vertices of $G_1$ is the set of balanced (resp. unbalanced) vertices of $G$; (ii) The set of edges of $G_1$ is a non-empty subset of the set $E$ of edges of $G$ that does not coincide with $E$.) Then we immediately realize that the only known examples of irreducible geodesic nets with four unbalanced vertices are the trees with $1$ or $2$ balanced vertices that can be seen at the right side of figure 1.2.

This brings us to the questions that we are trying to answer in this paper:

{\bf Question.} Do there exist irreducible geodesic nets with $4$ unbalanced vertices in the Euclidean plane with at least $3$ balanced vertices? Can they contain cycles of balanced points?

The main result of the paper is that the answer for these questions is yes. 

{\bf Main Theorem.} There exists an irreducible geodesic net in the Euclidean plane that has $16$ balanced vertices and $4$ unbalanced vertices.

It is tempting to conjecture that our example is one of a series of similar examples with arbitrary large number of balanced vertices, but at the moment this is the only new example of an irreducible geodesic net with four unbalanced vertices that we were able to construct.

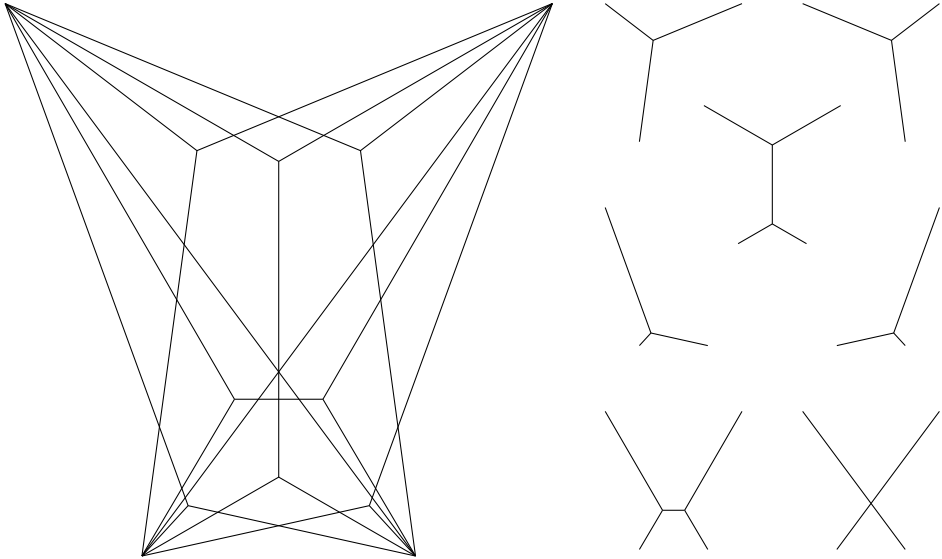
\begin{figure}[t]
	\centering
	\begin{tikzpicture}[scale=0.7]
	\clip (-5.5,1.5) rectangle (20,12);
	\node at (0,0) (Q) {};
	\node at (150:3) (X) {};
	\node at (125:3) (Y1) {};
	\node at (90:3) (Y2) {};
	\node at (55:3) (Y3) {};
	\node at (30:3) (Z) {};
	
	\node at (0,15) (P) {};
	\path (P) -- ++(210:6) node (A) {};
	\path (P) -- ++(255:6) node (B1) {};
	\path (P) -- ++(270:6) node (B2) {};
	\path (P) -- ++(285:6) node (B3) {};
	\path (P) -- ++(330:6) node (C) {};
	
	\node at (-.84,4.48) (L) {};
	\node at (.84,4.48) (N) {};

	\draw (A.center) -- (B1.center) -- (C.center);
	\draw (A.center) -- (B2.center) -- (C.center);
	\draw (A.center) -- (B3.center) -- (C.center);
	
	\draw (X.center) -- (Y1.center) -- (Z.center);
	\draw (X.center) -- (Y2.center) -- (Z.center);
	\draw (X.center) -- (Y3.center) -- (Z.center);
	
	\draw (B1.center) -- (X.center);
	\draw (B3.center) -- (Z.center);
	
	\draw (Y1.center) -- (A.center);
	\draw (Y3.center) -- (C.center);
	
	\draw (B2.center) -- (Y2.center);
	
	\draw (A.center) -- (L.center);
	\draw (X.center) -- (L.center);
	\draw (Z.center) -- (N.center);
	\draw (C.center) -- (N.center);
	\draw (L.center) -- (N.center);
	
	\draw (A.center) -- (Z.center);
	\draw (C.center) -- (X.center);
	\begin{scope}[scale=0.25, yshift=36cm, xshift=30cm]
	\node at (0,0) (Q) {};
	\node at (150:3) (X) {};
	\node at (125:3) (Y1) {};
	\node at (90:3) (Y2) {};
	\node at (55:3) (Y3) {};
	\node at (30:3) (Z) {};
	
	\node at (0,15) (P) {};
	\path (P) -- ++(210:6) node (A) {};
	\path (P) -- ++(255:6) node (B1) {};
	\path (P) -- ++(270:6) node (B2) {};
	\path (P) -- ++(285:6) node (B3) {};
	\path (P) -- ++(330:6) node (C) {};
	
	\node at (-.84,4.48) (L) {};
	\node at (.84,4.48) (N) {};

	\draw (A.center) -- (B1.center) -- (C.center);
	
	\draw (B1.center) -- (X.center);
	\end{scope}
	\begin{scope}[scale=0.25, yshift=36cm, xshift=45cm]
	\node at (0,0) (Q) {};
	\node at (150:3) (X) {};
	\node at (125:3) (Y1) {};
	\node at (90:3) (Y2) {};
	\node at (55:3) (Y3) {};
	\node at (30:3) (Z) {};
	
	\node at (0,15) (P) {};
	\path (P) -- ++(210:6) node (A) {};
	\path (P) -- ++(255:6) node (B1) {};
	\path (P) -- ++(270:6) node (B2) {};
	\path (P) -- ++(285:6) node (B3) {};
	\path (P) -- ++(330:6) node (C) {};
	
	\node at (-.84,4.48) (L) {};
	\node at (.84,4.48) (N) {};

	\draw (A.center) -- (B3.center) -- (C.center);
	
	\draw (B3.center) -- (Z.center);
	\end{scope}
	\begin{scope}[scale=0.25, yshift=28.25cm, xshift=37.5cm]
	\node at (0,0) (Q) {};
	\node at (150:3) (X) {};
	\node at (125:3) (Y1) {};
	\node at (90:3) (Y2) {};
	\node at (55:3) (Y3) {};
	\node at (30:3) (Z) {};
	
	\node at (0,15) (P) {};
	\path (P) -- ++(210:6) node (A) {};
	\path (P) -- ++(255:6) node (B1) {};
	\path (P) -- ++(270:6) node (B2) {};
	\path (P) -- ++(285:6) node (B3) {};
	\path (P) -- ++(330:6) node (C) {};
	
	\node at (-.84,4.48) (L) {};
	\node at (.84,4.48) (N) {};

	\draw (A.center) -- (B2.center) -- (C.center);
	
	\draw (X.center) -- (Y2.center) -- (Z.center);
	
	\draw (B2.center) -- (Y2.center);
	\end{scope}
	
	\begin{scope}[scale=0.25, yshift=20.5cm, xshift=30cm]
	\node at (0,0) (Q) {};
	\node at (150:3) (X) {};
	\node at (125:3) (Y1) {};
	\node at (90:3) (Y2) {};
	\node at (55:3) (Y3) {};
	\node at (30:3) (Z) {};
	
	\node at (0,15) (P) {};
	\path (P) -- ++(210:6) node (A) {};
	\path (P) -- ++(255:6) node (B1) {};
	\path (P) -- ++(270:6) node (B2) {};
	\path (P) -- ++(285:6) node (B3) {};
	\path (P) -- ++(330:6) node (C) {};
	
	\node at (-.84,4.48) (L) {};
	\node at (.84,4.48) (N) {};

	\draw (X.center) -- (Y1.center) -- (Z.center);
	
	\draw (Y1.center) -- (A.center);
	\end{scope}
	\begin{scope}[scale=0.25, yshift=20.5cm, xshift=45cm]
	\node at (0,0) (Q) {};
	\node at (150:3) (X) {};
	\node at (125:3) (Y1) {};
	\node at (90:3) (Y2) {};
	\node at (55:3) (Y3) {};
	\node at (30:3) (Z) {};
	
	\node at (0,15) (P) {};
	\path (P) -- ++(210:6) node (A) {};
	\path (P) -- ++(255:6) node (B1) {};
	\path (P) -- ++(270:6) node (B2) {};
	\path (P) -- ++(285:6) node (B3) {};
	\path (P) -- ++(330:6) node (C) {};
	
	\node at (-.84,4.48) (L) {};
	\node at (.84,4.48) (N) {};

	\draw (X.center) -- (Y3.center) -- (Z.center);
	
	\draw (Y3.center) -- (C.center);
	\end{scope}
	
	\begin{scope}[scale=0.25, yshift=5cm, xshift=30cm]
	\node at (0,0) (Q) {};
	\node at (150:3) (X) {};
	\node at (125:3) (Y1) {};
	\node at (90:3) (Y2) {};
	\node at (55:3) (Y3) {};
	\node at (30:3) (Z) {};
	
	\node at (0,15) (P) {};
	\path (P) -- ++(210:6) node (A) {};
	\path (P) -- ++(255:6) node (B1) {};
	\path (P) -- ++(270:6) node (B2) {};
	\path (P) -- ++(285:6) node (B3) {};
	\path (P) -- ++(330:6) node (C) {};
	
	\node at (-.84,4.48) (L) {};
	\node at (.84,4.48) (N) {};

	\draw (A.center) -- (L.center);
	\draw (X.center) -- (L.center);
	\draw (Z.center) -- (N.center);
	\draw (C.center) -- (N.center);
	\draw (L.center) -- (N.center);
	\end{scope}
	\begin{scope}[scale=0.25, yshift=5cm, xshift=45cm]
	\node at (0,0) (Q) {};
	\node at (150:3) (X) {};
	\node at (125:3) (Y1) {};
	\node at (90:3) (Y2) {};
	\node at (55:3) (Y3) {};
	\node at (30:3) (Z) {};
	
	\node at (0,15) (P) {};
	\path (P) -- ++(210:6) node (A) {};
	\path (P) -- ++(255:6) node (B1) {};
	\path (P) -- ++(270:6) node (B2) {};
	\path (P) -- ++(285:6) node (B3) {};
	\path (P) -- ++(330:6) node (C) {};
	
	\node at (-.84,4.48) (L) {};
	\node at (.84,4.48) (N) {};
	
	\draw (A.center) -- (Z.center);
	\draw (C.center) -- (X.center);	
	\end{scope}

\end{tikzpicture}
	\caption{An example of a geodesic net in the plane with four unbalanced vertices. While this net might look complicated at first, it is just a union of seven trees as depicted. In other words, in is an \enquote{overlay} of more elementary nets and therefore not \emph{irreducible}}
	\label{fig:fournet}
\end{figure}

\section{Construction of the example}

Before we start, it is worth noting that the geodesic net $G$ we are constructing here will be symmetric under a rotation by $90\degrees$.

In this section, the notation $ABC$ means the angle at $B$ from $A$ to $C$. Furthermore $\mathcal{O}=(0,0)$ will denote the origin. The end result of the construction is given in figure \ref{fig:fullnet}.

\subsection{The octagon of $a_i$ and $b_i$}

Fix four vertices at $a_1=(1,0)$, $a_2=(0,1)$, $a_3=(-1,0)$ and $a_4=(0,-1)$. Now add another four vertices $b_1$, $b_2$, $b_3$, $b_4$ so that we arrive at an octagon $a_1b_2a_2b_2a_3b_3a_4b_4$ where the interior angle at each $a_i$ is $150\degrees$ and the interior angle at each $b_i$ is $120\degrees$.

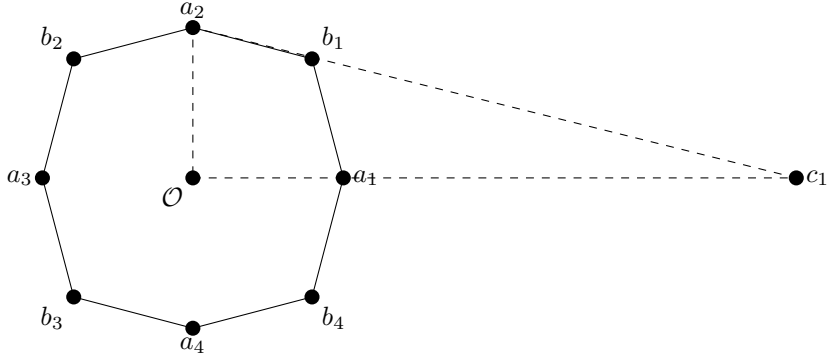
\begin{figure}
	\centering
	\begin{tikzpicture}[scale=2]
		\fill (0,0) circle (0.05) node [below left=0.1] {$\mathcal{O}$};
	
		\draw (0:1) coordinate (a1) -- (45:1.12) coordinate (b1) -- (90:1) coordinate (a2) -- (135:1.12) coordinate (b2) -- (180:1) coordinate (a3) -- (225:1.12) coordinate (b3) -- (270:1) coordinate (a4) -- (315:1.12) coordinate (b4) -- cycle;
		
		\fill (a1) circle (0.05) node [right=0.2] {$a_1$};
		\fill (a2) circle (0.05) node [above=0.2] {$a_2$};
		\fill (a3) circle (0.05) node [left=0.2] {$a_3$};
		\fill (a4) circle (0.05) node [below=0.2] {$a_4$};
		
		\fill (b1) circle (0.05) node [above right=0.2] {$b_1$};
		\fill (b2) circle (0.05) node [above left=0.2] {$b_2$};
		\fill (b3) circle (0.05) node [below left=0.2] {$b_3$};
		\fill (b4) circle (0.05) node [below right=0.2] {$b_4$};
		
		\fill (0:{tan(76)}) coordinate (c1) circle (0.05) node [right=0.2] {$c_1$};
		
		\draw [dashed] (0,0) -- (c1) -- (a2) -- cycle;
	\end{tikzpicture}
	\caption{The octagon in the first step with alternating interior angles of $150\degrees$ and $120\degrees$. The dashed triangle is used to define the position of $c_1$. Note that the side $a_2c_1$ does \emph{not} go through $b_1$. In fact, the angle $\mathcal{O}a_2b_1$ is $75\degrees$ whereas the angle $\mathcal{O}a_2c_1$ is just over $76\degrees$.}
	\label{fig:octagon}	
\end{figure}

\subsection{The four unbalanced vertices $c_i$}
There is a uniquely defined triangle as follows, see figure \ref{fig:octagon}:
\begin{itemize}
	\item The left side is $a_2\mathcal{O}$.
	\item The angle at $\mathcal{O}$ is $90\degrees$.
	\item The angle at $a_2$ is $\arccos\left(\frac12-\cos 75\degrees\right)\approx 76.04\degrees$.
\end{itemize}
The resulting third vertex of this triangle is denoted by $c_1$. By rotation around $\mathcal{O}$ we get vertices $c_2$, $c_3$ and $c_4$, see figure \ref{fig:fullnet}.

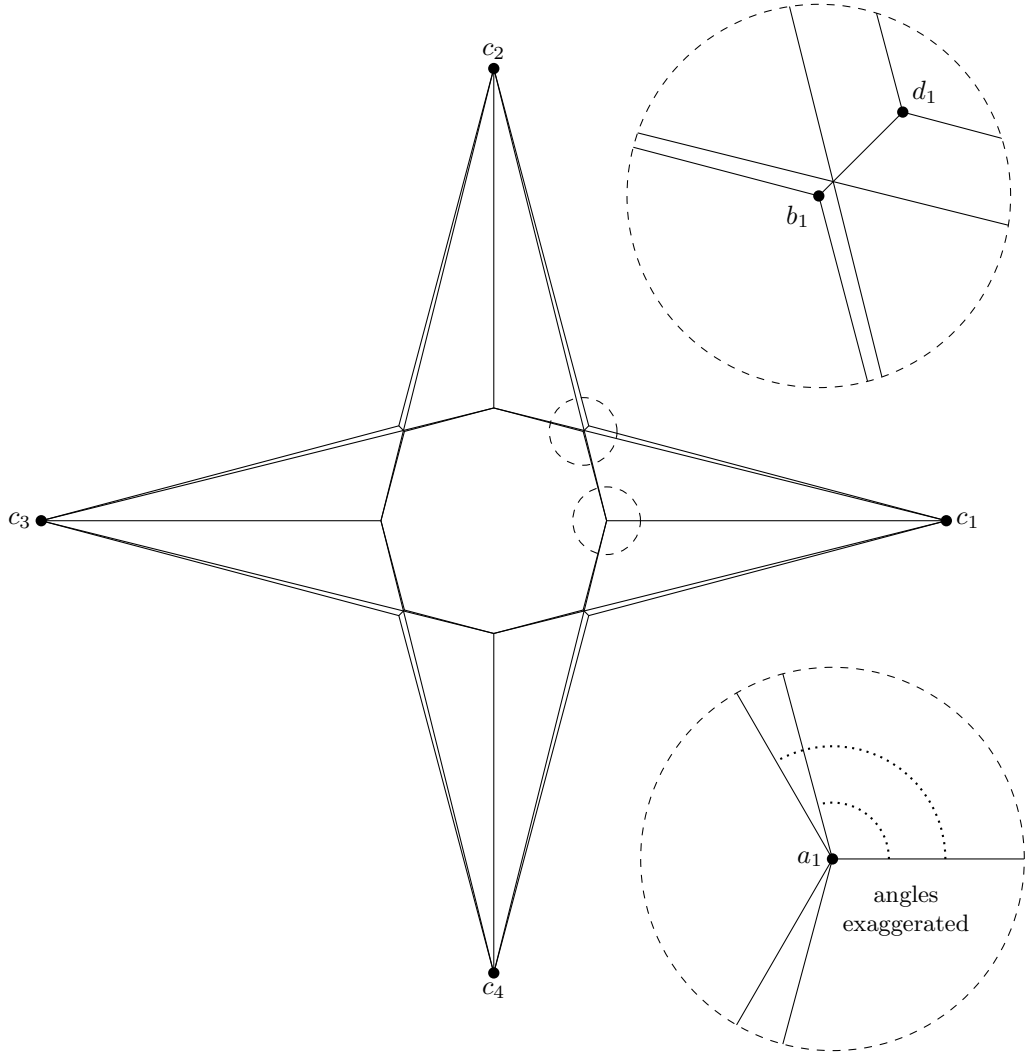
\begin{figure}
	\centering
	\begin{tikzpicture}[scale=1.5]	
		\draw (0:1) coordinate (a1) -- (45:1.12) coordinate (b1) -- (90:1) coordinate (a2) -- (135:1.12) coordinate (b2) -- (180:1) coordinate (a3) -- (225:1.12) coordinate (b3) -- (270:1) coordinate (a4) -- (315:1.12) coordinate (b4) -- cycle;
		
		\fill (0:{tan(76)}) coordinate (c1) circle (0.05) node [right=0.2] {$c_1$};
		\fill (90:{tan(76)}) coordinate (c2) circle (0.05) node [above=0.2] {$c_2$};
		\fill (180:{tan(76)}) coordinate (c3) circle (0.05) node [left=0.2] {$c_3$};
		\fill (270:{tan(76)}) coordinate (c4) circle (0.05) node [below=0.2] {$c_4$};
		
		\draw (a1) -- (c2) -- (a3) -- (c4) -- (a1);
		\draw (a2) -- (c3) -- (a4) -- (c1) -- (a2);
		
		\draw (a1) -- (c1);
		\draw (a2) -- (c2);
		\draw (a3) -- (c3);
		\draw (a4) -- (c4);
		
		\draw (b1) -- ++ (45:0.07) coordinate (d1);
		\draw (c1) -- (d1) -- (c2);
		
		\draw (b2) -- ++ (135:0.07) coordinate (d2);
		\draw (c2) -- (d2) -- (c3);
		
		\draw (b3) -- ++ (225:0.07) coordinate (d3);
		\draw (c3) -- (d3) -- (c4);
		
		\draw (b4) -- ++ (315:0.07) coordinate (d4);
		\draw (c4) -- (d4) -- (c1);
		
		\draw [dashed] (b1) circle (0.3);
		\draw [dashed] (a1) circle (0.3);
		
		\begin{scope}[every coordinate/.style={shift={(-9,-9)},scale=15}]
			\begin{scope}
			\clip ([c]b1) circle (1.7);
			\draw ([c]a1) -- ([c]b1) -- ([c]a2);
			\draw ([c]a1) -- ([c]c2);
			\draw ([c]a2) -- ([c]c1);
			\draw ([c]b1) -- ([c]d1);
			\draw ([c]c1) -- ([c]d1) -- ([c]c2);
			\fill ([c]b1) circle (0.05) node [below left] {$b_1$};
			\fill ([c]d1) circle (0.05) node [above right] {$d_1$};
			\end{scope}
			\draw [dashed] ([c]b1) circle (1.7);
		\end{scope}
		
		\begin{scope}[every coordinate/.style={shift={(-12,-3)},scale=15}]
			\begin{scope}
			\clip ([c]a1) circle (1.7);
			\draw ([c]a1) -- ++ (120:3);
			\draw ([c]a1) -- ++ (-120:3);
			\draw ([c]a1) -- ++ (105:3);
			\draw ([c]a1) -- ++ (-105:3);
			\draw ([c]a1) -- ([c]c1);
			\fill ([c]a1) circle (0.05) node [left] {$a_1$};
			\path ([c]a1) -- ++(325:0.8) node [text width=2cm,align=center] {\small angles exaggerated\par};
			\path ([c]a1) -- ++(0:0.5) coordinate (arcstart1);
			\path ([c]a1) -- ++(0:1) coordinate (arcstart2);
			\draw [dotted,thick] (arcstart1) arc (0:105:0.5);
			\draw [dotted,thick] (arcstart2) arc (0:120:1);
			\end{scope}
			\draw [dashed] ([c]a1) circle (1.7);
		\end{scope}

	\end{tikzpicture}
	\caption{The complete geodesic net with the unbalanced vertices $c_1$, $c_2$, $c_3$, $c_4$. The zoom-ins are provided for overview of the balanced vertices. The two dotted angles at $a_1$ are $180\degrees-75\degrees=105\degrees$ and $180\degrees-\arccos\left(\frac12-\cos 75\degrees\right)\approx 103.96\degrees$ respectively. In the zoom-in, they are exaggerated for clarity.}
	\label{fig:fullnet}	
\end{figure}

\subsection{The fermat points $d_i$}

We define $d_1$ as follows (again, $d_2$, $d_3$ and $d_4$ will be defined by rotational symmetry): It is the Fermat point of the triangle $b_1c_1c_2$. Recall that the Fermat point is the unique point $x$ in a triangle such that the angle at $x$ between any two corners of the triangle is $120\degrees$. It exists as long as all interior angles of the triangle are less than $120\degrees$. So we are left to show:
\begin{lemma}
	All three interior angles of the triangle $b_1c_1c_2$ are less than $120\degrees$.
\end{lemma}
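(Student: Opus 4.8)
The plan is to exploit the reflection symmetry of the configuration across the line $y=x$. With $c_1=(t,0)$ and $c_2=(0,t)$, where $t=\tan\theta$ and $\theta=\arccos\!\left(\tfrac12-\cos75\degrees\right)$, and with $b_1=(s,s)$ lying on the axis of symmetry, this reflection fixes $b_1$ and interchanges $c_1$ and $c_2$. Hence the triangle $b_1c_1c_2$ is isosceles with $\angle b_1c_1c_2=\angle b_1c_2c_1$. Since the two base angles are equal and the apex angle at $b_1$ is strictly positive, each base angle is strictly less than $90\degrees$, hence less than $120\degrees$. So the only angle that requires genuine work is the apex angle $\angle c_1b_1c_2$.

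To control the apex angle I would compute $\cos\angle c_1b_1c_2$ directly from the vectors $c_1-b_1=(t-s,-s)$ and $c_2-b_1=(-s,t-s)$. A short computation gives
\[
\cos\angle c_1b_1c_2=\frac{-2s(t-s)}{(t-s)^2+s^2},
\]
so the desired inequality $\angle c_1b_1c_2<120\degrees$, equivalently $\cos\angle c_1b_1c_2>-\tfrac12$, becomes after clearing the (positive) denominator the quadratic inequality $(t-s)^2-4s(t-s)+s^2>0$. Reading the left side as a quadratic in $t-s$ with roots $s(2\pm\sqrt3)$, and noting that $t-s>0$ is large, this reduces to the single clean condition $t>s(3+\sqrt3)$.

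The final step is to evaluate this threshold. From the octagon construction the interior angle at $a_2$ equals $150\degrees$ and is bisected by the segment $a_2\mathcal{O}$, so $\angle\mathcal{O}a_2b_1=75\degrees$; this pins down $s=\tfrac{3+\sqrt3}{6}$ and yields the exact identity $s(3+\sqrt3)=2+\sqrt3=\tan75\degrees$. Thus the apex condition collapses to $t=\tan\theta>\tan75\degrees$, i.e.\ $\theta>75\degrees$, i.e.\ $\cos\theta<\cos75\degrees$. Substituting $\cos\theta=\tfrac12-\cos75\degrees$, this is just $\cos75\degrees>\tfrac14$, which via $\cos75\degrees=\tfrac{\sqrt6-\sqrt2}{4}$ is equivalent to $\sqrt6>1+\sqrt2$, hence (squaring) to $9>8$. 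The main obstacle I expect is not any single calculation but the fact that the apex angle is genuinely close to the bound (about $117\degrees$ against $120\degrees$), so that a purely numerical estimate would be unconvincing; the real content of the lemma is the exact algebraic coincidence that the octagon constant $s$ makes the threshold equal to $\tan75\degrees$, tying the estimate back to the $75\degrees$ half-angle of the octagon and reducing the whole statement to $9>8$. This is also why the figure stresses that $\angle\mathcal{O}a_2c_1\approx76\degrees$ lies just beyond the $75\degrees$ of $\angle\mathcal{O}a_2b_1$.
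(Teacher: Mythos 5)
Your proof is correct, but it takes a genuinely different route from the paper's. The paper dismisses the angles at $c_1$, $c_2$ as obviously small and handles the apex angle by a synthetic angle chase through the origin: since the segment $a_2c_1$ lies above $b_1c_1$, the angle $b_1c_1\mathcal{O}$ is less than $a_2c_1\mathcal{O}=90\degrees-\arccos\left(\frac12-\cos75\degrees\right)\approx 13.96\degrees$; the angle sum in the triangle $\mathcal{O}b_1c_1$ then gives $\mathcal{O}b_1c_1>121.04\degrees$ approximately, and the reflection symmetry gives $c_1b_1c_2=360\degrees-2\cdot\mathcal{O}b_1c_1\approx 117.92\degrees<120\degrees$. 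That argument is shorter, but it rests on decimal evaluations of $\arccos\left(\frac12-\cos75\degrees\right)$; your coordinate computation replaces all numerics by the exact chain $t>s(3+\sqrt3)=\tan75\degrees$, equivalently $\theta>75\degrees$, equivalently $\cos75\degrees>\frac14$, equivalently $9>8$. In fact the two proofs pivot on the same critical inequality: unwinding the paper's estimates, its conclusion reads $c_1b_1c_2<270\degrees-2\theta$ with $\theta=\arccos\left(\frac12-\cos75\degrees\right)$, so it needs precisely $\theta>75\degrees$, which is the threshold you isolate (and even its geometric input, that $a_2c_1$ lies above $b_1c_1$, is the same fact). So your version buys an exact, numerics-free certificate and makes visible why the lemma is barely true --- the octagon's half-angle $75\degrees$ is exactly the threshold --- while the paper's version buys brevity and coordinate-free geometric intuition. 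Two small points to tighten: the quadratic $(t-s)^2-4s(t-s)+s^2>0$ also holds on the branch $t-s<s(2-\sqrt3)$, so $t>s(3+\sqrt3)$ is a sufficient rather than equivalent condition, which is harmless since you verify it; and passing from $\sqrt6>1+\sqrt2$ to $9>8$ takes two squarings (first $6>3+2\sqrt2$, i.e.\ $3>2\sqrt2$, then $9>8$), not one.
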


\begin{proof}	
	It is obvious that the interior angles at $c_1$ and $c_2$ are less than $120\degrees$ (in fact they are both significantly smaller than $90\degrees$). So it remains to show that the angle at $b_2$ is less than $120\degrees$. This can be shown as follows:
	\begin{itemize}
		\item $c_1Ob_1=45\degrees$ follows from the symmetries of the octagon.
		\item The segment $a_2c_1$ is above the segment $b_1c_1$ (see figure \ref{fig:octagon}). It follows that
		\begin{align*}
			 b_1c_1O< a_1c_1O\approx 180\degrees-90\degrees-76.04\degrees=13.96
		\end{align*}
		\item We now have estimates for two of the angles of the triangle with corners $\mathcal{O}$, $c_1$, $b_1$ and get
		\begin{align*}
			 Ob_1c_1&=180\degrees- c_1Ob_1- b_1c_1O>180\degrees- c_1Ob_1- a_1c_1\mathcal{O}\\
			 &\approx 180\degrees-45\degrees-13.96\degrees=121.04\degrees
		\end{align*}
		\item We can us the symmetry of the picture and conclude:
		\begin{align*}
			 c_1b_1c_2&=360\degrees -  Ob_1c_1 -  c_2b_1O \\
			 &= 360\degrees - 2\cdot  Ob_1c_1\approx 117.92<120\degrees
		\end{align*}
	\end{itemize}	
\end{proof}

\subsection{The edges}

Finally we add the edges of the geodesic net. We recommend referring again to figure \ref{fig:fullnet} for better understanding.

The following definitions have to be read circular, e.g. $a_5$ is the same as $a_1$. With that in mind, the edges of the geodesic net for $i=1,2,3,4$ are:
\begin{itemize}
	\item The edges of the octagon, given by $a_ib_i$ and $a_{i+1}b_i$.
	\item The radial edges given by $a_ic_i$.
	\item The edges given by $a_ic_{i+1}$ and $a_{i+1}c_i$.
	\item Finally, the edges for each Fermat point, given by $d_ib_i$, $d_ic_i$ and $d_ic_{i+1}$.
\end{itemize}

\subsection{The additional vertices $x_i$}

Note that we are getting four additional vertices of degree $6$ which are situated on the edge connecting $b_1$ and $d_1$ (again, see figure \ref{fig:fullnet}). We call these vertices $x_1$, $x_2$, $x_3$ and $x_4$.

\subsection{Checking the edges and the balance}

\begin{lemma}
	The net constructed above is a valid geodesic net with four unbalanced vertices, more specifically:
	\begin{itemize}
		\item Different edges can intersect only at their common endpoints (in other words: there never is an \enquote{overlay} of edges, so all edges have weight one).
		\item Each $a_i$, $b_i$, $d_i$, $x_i$ is balanced.
	\end{itemize}
\end{lemma}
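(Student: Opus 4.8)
The plan is to prove the two assertions separately, disposing of the balancing conditions first (where the geometry works in our favour) and then the no-overlay statement (where the bookkeeping is heaviest). Throughout I would exploit the full dihedral symmetry of the net — the stated $90\degrees$ rotation together with the reflections across the two coordinate axes and the two diagonals — so that it suffices to verify everything for one representative of each symmetry orbit, say $a_1$, $b_1$, $d_1$, $x_1$ together with their incident edges, always reflecting across the axis or diagonal through the vertex in question.

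For balance I would argue vertex type by vertex type, leaving $a_i$ for last. The vertices $d_i$ are immediate: by construction $d_i$ is the Fermat point of $b_ic_ic_{i+1}$ (which exists by the preceding lemma), so its three incident edges $d_ib_i$, $d_ic_i$, $d_ic_{i+1}$ meet at mutual angles $120\degrees$, and three unit vectors at mutual angles $120\degrees$ sum to zero. The vertices $b_i$ are almost as easy: $b_i$ has degree $3$, with edges to $a_i$, to $a_{i+1}$, and (through $x_i$) to $d_i$. The octagon gives the interior angle $a_ib_ia_{i+1}=120\degrees$, while $b_id_i$ runs along the diagonal ray $\mathcal{O}b_i$, the axis of the reflection fixing $b_i$; hence the two remaining angles at $b_i$ are equal, and since all three sum to $360\degrees$ each equals $120\degrees$, giving balance as before. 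The vertices $x_i$ cost almost nothing once I check that they are genuine triple points: the segments $a_1c_2$ and $a_2c_1$ are reflections of each other across the diagonal, so their crossing lies on that diagonal, which also carries $b_1d_1$; provided this common point lies strictly interior to all three segments, the six edges at $x_1$ split into three antipodal pairs and sum to zero.

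The one balancing computation with real content is at $a_i$. Here $a_1$ has degree $5$, with edges to $c_1$ (along the positive horizontal axis), to $b_1$ and $b_4$, and to $c_2$ and $c_4$. Reflection across the horizontal axis pairs $b_1\leftrightarrow b_4$ and $c_2\leftrightarrow c_4$ and fixes the edge to $c_1$, so the vertical components cancel automatically and balance reduces to the vanishing of the horizontal component. I would record that $a_1b_1$ leaves $a_1$ at angle $180\degrees-75\degrees$ and $a_1c_2$ at angle $180\degrees-\beta$, where $\beta=\arccos(\tfrac12-\cos75\degrees)$ is the defining angle of the construction, whence the horizontal balance reads $1+2\cos(180\degrees-75\degrees)+2\cos(180\degrees-\beta)=1-2\cos75\degrees-2\cos\beta=0$, which holds precisely because $\cos\beta=\tfrac12-\cos75\degrees$ by definition. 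In other words the positions of the $c_i$ were reverse-engineered exactly to balance the $a_i$, and this identity is the heart of the construction.

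The genuinely laborious part, and the main obstacle, is the no-overlay statement. After the reduction by symmetry, most pairs of edges visibly meet only at a shared endpoint, and the only transversal interior crossings are the four declared vertices $x_i$, so what remains is a finite check. The one place where something must actually be proved is the pair $a_1b_1$ and $a_1c_2$, which share the endpoint $a_1$ and leave it at the nearly equal angles $105\degrees$ and $\approx 103.96\degrees$: these do not overlay precisely because $75\degrees\ne\beta$, equivalently $\cos75\degrees\ne\tfrac14$, and since $\cos75\degrees\approx0.259>\tfrac14$ their directions are genuinely distinct. This near-coincidence, together with the fact that it is only a near-coincidence, is exactly what makes the net nontrivial rather than a reducible overlay. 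I would then record the companion inequality that the triple point $x_1$ lies strictly between $b_1$ and $d_1$ on the diagonal, i.e.\ $\tfrac{3+\sqrt3}{6}<\tfrac{\tan\beta}{1+\tan\beta}<$ (the diagonal coordinate of $d_1$); this both makes $x_1$ a genuine $6$-valent crossing and guarantees that $a_1c_2$ and $a_2c_1$ pass just outside the octagon vertex $b_1$ rather than cutting into the octagon, after which the remaining incidences near $b_1$, $d_1$ and along the axes are routine to exclude.
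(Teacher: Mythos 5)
Your proposal is correct and follows essentially the same route as the paper: balance is verified vertex type by vertex type (Fermat property at $d_i$, octagon angle plus symmetry at $b_i$, triple-crossing at $x_i$, and the defining identity $\cos\beta=\tfrac12-\cos 75\degrees$ at $a_i$), and the no-overlay claim comes down to the same two near-coincidences the paper isolates, namely $a_1c_2$ versus $a_1b_1$ and $d_1$ versus $x_1$. The only notable difference is presentational but welcome: where the paper compares approximate angle values ($105\degrees$ vs.\ $\approx 103.96\degrees$ at $a_1$, and $15\degrees$ vs.\ $\approx 13.96\degrees$ at $c_1$), you reduce both checks to the exact inequality $\cos 75\degrees=\tfrac{\sqrt6-\sqrt2}{4}>\tfrac14$ (equivalently $\beta>75\degrees$), and your diagonal-coordinate formulation of the second check (that $x_1$ lies strictly between $b_1$ and $d_1$) is equivalent to the paper's angle comparison at $c_1$.
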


\begin{proof}
	Note again that the picture is rotationally symmetric by design. So we can concentrate on the corner $c_1Oc_2$ (the upper right corner).
	
	As long as we prove that none of the edges are parallel, the result follows. We will go through the edges as defined above, again adding them step by step. 
	\begin{itemize}
		\item It is apparent that none of $a_1b_1$, $a_2b_1$ and $a_1c_1$ are.
		\item Adding $a_1c_2$, note that the angle between $a_1b_2$ and $a_1c_2$ is approximately $1.04\degrees$, so these two edges are not parallel. It is apparent that $a_1c_2$ is never parallel to any other edge. By symmetry, adding $a_2c_1$ doesn't create issues either.
		\item The edge $d_1b_1$ is radial at the angle $45\degrees$. No other edge is. This finally brings us to the only two interesting edges: adding $d_1c_1$ and $d_1c_2$. We will consider the former. Symmetry will then deal with the latter. The only problem could arise if $d_1c_1$ coincides with the previously added $a_2c_1$ (which would also imply that $d_1=x_1$). Elementary calculations involving the angle sum in triangles, however, show that $d_1c_1O=15\degrees$ whereas $a_2c_1O\approx 13.96\degrees$. It follows that the two edges in question are not parallel.
	\end{itemize}
	
	We finish with showing that all vertices except the $c_i$ are balanced. By symmetry, it is again enough to consider $i=1$:
	\begin{itemize}
		\item Each of the $a_1$ is a degree $5$ balanced vertex. Putting the origin of the coordinate system at $a_1$, the sum of the unit vectors parallel to the five edges can be written as follows (refer to the zoom-in in figure \ref{fig:fullnet}):
		\begin{align*}
			\langle 1,0\rangle &+\langle \cos(180\degrees-\arccos(1/2-\cos(75\degrees))),\sin(180\degrees-\arccos(1/2-\cos(75\degrees)))\rangle\\
			&+ \langle \cos(180\degrees-75\degrees),\sin(180\degrees-75\degrees)\rangle\\
			&+\langle \cos(180\degrees+\arccos(1/2-\cos(75\degrees))),\sin(180\degrees+\arccos(1/2-\cos(75\degrees)))\rangle\\
			&+ \langle \cos(180\degrees+75\degrees),\sin(180\degrees+75\degrees)\rangle=\langle 0,0\rangle
		\end{align*}
		In fact, the very reason for choosing the \enquote{odd angle} $\arccos(1/2-\cos(75\degrees))$ early in the construction was to ensure that the $a_i$ are balanced.
		\item $b_1$ is a degree three balanced vertex. This follows from the fact that two of the incident edges belong to the octagon, so the angle between them at $b_1$ is $120\degree$. The third edge at $b_1$ is the bisector of the larger angle between the other two edges by symmetry. The balancedness of $b_1$ follows.
		\item $d_1$ is balanced by the definition of a Fermat point.
		\item Finally $x_1$ is just the point of intersection of several straight edges and is trivially balanced.
	\end{itemize}
\end{proof}

\section{Proof that $G$ is irreducible}

While it is obvious that $G$ is not a tree, we need to show that:

\begin{lemma}
	$G$ is irreducible.
\end{lemma}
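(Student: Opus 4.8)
The plan is to recast irreducibility as a purely combinatorial statement about the edge set. A geodesic subnet of $G$ is the same thing as a subset $S$ of the edge set $E$ with the property that at every balanced vertex the unit vectors along the edges of $S$ incident to it still sum to zero; a \emph{proper} subnet corresponds to such an $S$ with $\emptyset\neq S\subsetneq E$. So I must show that the only balanced choices are $S=\emptyset$ and $S=E$. I will first determine, for each type of balanced vertex, precisely which subsets of its incident edges are balanced, and then propagate these local constraints around the graph. The easiest ingredient is a rigidity statement for a trivalent vertex whose three edges meet at mutual angles of $120^\circ$: no single edge balances (one unit vector is nonzero) and no two edges balance (two unit vectors at $120^\circ$ have nonzero sum), so the only balanced subsets are $\emptyset$ and all three. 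This applies verbatim to every $b_i$ and, since $d_i$ is a Fermat point, to every $d_i$; each such vertex is \enquote{all or nothing} in $S$.

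The crucial and most delicate case, which I expect to be the main obstacle, is the degree-$5$ vertex $a_i$. Placing $a_1$ at the origin, its five edges point (as in the balance check) at $0^\circ$ (to $c_1$), at $\pm105^\circ$ (to $b_1,b_4$) and at $\pm(180^\circ-\theta)$ (to $c_2,c_4$, through $x_1,x_4$), where $\theta=\arccos\left(\frac12-\cos75^\circ\right)$. I would first cancel the vertical components: since $\sin105^\circ=\sin75^\circ$ and $\sin(180^\circ-\theta)=\sin\theta$ are \emph{distinct} positive numbers (this uses $\theta\neq75^\circ$, i.e.\ $\cos75^\circ\neq\frac14$), balancing forces the reflection-symmetric pair to $b_1,b_4$ to be present together and likewise the pair to $c_2,c_4$. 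Writing $\alpha,\beta,\gamma\in\{0,1\}$ for the presence of the edge to $c_1$, of the pair to $b_1,b_4$, and of the pair to $c_2,c_4$, the horizontal balance reduces (using $2\cos105^\circ=-2\cos75^\circ$ and $2\cos(180^\circ-\theta)=2\cos75^\circ-1$) to $\alpha-\gamma=2\cos75^\circ\,(\beta-\gamma)$. Since $2\cos75^\circ=\frac{\sqrt6-\sqrt2}{2}$ is irrational while $\alpha-\gamma$ is an integer, we must have $\beta=\gamma$ and then $\alpha=\beta=\gamma$; hence $a_i$ too is all or nothing. This number-theoretic nondegeneracy is exactly the reason for the \enquote{odd angle} chosen in the construction, and verifying it carefully is the heart of the argument.

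It remains to handle the degree-$6$ vertices $x_i$. Each $x_i$ is the common crossing point of three straight edges (the segment $b_id_i$ and the two cross-edges $a_ic_{i+1}$, $a_{i+1}c_i$), so its six directions come in three antiparallel pairs $\pm u_1,\pm u_2,\pm u_3$. A subset is balanced iff $\lambda_1u_1+\lambda_2u_2+\lambda_3u_3=0$ for some $\lambda_k\in\{-1,0,1\}$, and since the three lines are pairwise non-parallel this forces all $\lambda_k=0$ — that is, each of the three lines is present with both halves or with neither — unless the three lines happen to sit at mutual $60^\circ$ angles. I would exclude the exceptional case again by $\theta\neq75^\circ$: a short computation gives the three line directions as $45^\circ,\ 180^\circ-\theta,\ \theta-90^\circ$, whose pairwise differences equal $60^\circ$ precisely when $\theta=75^\circ$. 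Thus at $x_i$ each of its three lines is individually all or nothing.

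With these three facts the propagation is immediate. Suppose $S\neq\emptyset$ and pick an edge $e\in S$. If $e$ meets some $a_i$, $b_i$ or $d_i$, that vertex is turned fully on; if instead $e$ is one of the two segments $x_ic_i$, $x_ic_{i+1}$ joining $x_i$ to an unbalanced vertex, then the corresponding line at $x_i$ is fully on, so its other half ($a_{i+1}x_i$ or $a_ix_i$) lies in $S$, again turning some $a_j$ on. Once a single $a_j$ (equivalently some $b_j$ or $d_j$) is on, its octagon edges force $b_j$ and $b_{j-1}$ on, hence the neighbouring $a$'s on, so by going around all $a_i$ and all $b_i$ are on; the line $b_id_i$ through $x_i$ then forces every $d_i$ on; and finally all three lines at each $x_i$ are on. Therefore $S=E$, contradicting $S\subsetneq E$, and $G$ is irreducible.
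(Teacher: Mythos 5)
Your proof is correct, and its skeleton is the same as the paper's: establish an \enquote{all or nothing} property at each type of balanced vertex, then propagate through the connected graph to conclude that any nonempty subnet must be all of $G$. However, your execution of the two local steps is genuinely sharper, and one of them fills a real gap. Where the paper disposes of the degree-$5$ vertices $a_i$ by asserting that \enquote{a simple check of the $2^5$ subsets} works, you actually prove the claim: pairing the vertical components via $\sin 75^\circ\neq\sin\theta$ and then killing the horizontal equation $\alpha-\gamma=2\cos 75^\circ\,(\beta-\gamma)$ by the irrationality of $2\cos 75^\circ=\frac{\sqrt6-\sqrt2}{2}$ is a clean replacement for brute force, and it isolates exactly the nondegeneracy condition ($\cos 75^\circ\neq\frac14$, i.e.\ $\theta\neq 75^\circ$) that the construction's \enquote{odd angle} was designed to achieve. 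More importantly, your treatment of the degree-$6$ vertices $x_i$ addresses a point the paper glosses over: the paper dismisses them as \enquote{just points of intersection of edges}, but once the segments are subdivided at the $x_i$ (as they must be, since the $x_i$ are vertices of the graph), the half-edges $x_ic_i$ and $x_ic_{i+1}$ are incident to no balanced vertex other than $x_i$, so the paper's propagation (\enquote{all edges are incident to some $a_i$, $b_i$ or $d_i$}) does not literally cover them. Your lemma that each of the three lines through $x_i$ is individually all or nothing — proved by excluding the exceptional mutual-$60^\circ$ configuration, again via $\theta\neq75^\circ$ — is precisely what is needed to force these half-edges into the subnet, and it also lets your propagation start from an arbitrary edge of $S$ rather than from an $a_i$. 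In short: same strategy, but your local rigidity arguments are rigorous where the paper's are asserted, and complete where the paper's are slightly incomplete.
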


\begin{proof}
	We are going to give a proof by contradiction. Assume that $G_1$ is a proper geodesic subnet of $G$. First, assume that the set of (balanced) vertices of $G_1$ does not contain any $a_i$. Then $G_1$ does not contain any edges incident to $a_i$. Now it is easy to see that $G_1$ does not contain any vertices $b_i$ as well as edges incident to $b_i$. From here it is easy to see that $G_1$ is empty - a contradiction.
	
	So, we can assume without any loss of generality that $a_1$ is a (balanced) vertex of $G_1$.

	A simple check of the $2^5$ subsets of edges incident to $a_1$ (there are many symmetric cases) show that the only way that $a_1$ can be balanced is if \emph{all} incident edges are used. It follows that $G_1$ includes all vertices adjacent to $a_1$.
	
	We therefore know that $a_1,b_1,b_4,c_1,c_2,c_4$ are in the vertex set of $G_1$.
	
	Consider $b_1$ which is a degree three vertex. Obviously one can't take a proper subset of the set of incident edges to balance $b_1$ (and the same will be true for all degree $3$ vertices ). It follows that $G_1$ includes all vertices adjacent to $b_1$.
	
	We therefore know that $a_1,a_2,b_1,b_4,c_1,c_2,c_4,d_1$ are in the vertex set of $G_1$.
	
	Now that $a_2$ is in the net, we can reuse the argument based on $a_1$ above, adding $b_2$ and $c_3$ to the picture. Again, reuse previous arguments for $b_2$ and it follows that $d_2$ and $a_3$ are part of $G_1$. It should now be apparent how to conclude that $c_4$, $b_3$, $d_3$, $a_4$ and $d_4$ are in $G_1$.
	
	So $G_1$ includes all balanced vertices of $G$, except possibly the $x_i$. However, as previously argued, since all the $b_i$ and $d_i$ are of degree $3$, all their incident edges are in $G_1$. Also since the $a_i$ can only be balanced with all incident edges included, all edges of $G$ are in $G_1$. Since the $x_i$ are just points of intersection of edges, they are also in $G$.
	
	We can conclude that $G_1=G$. So, $G_1$ is not proper, and we obtain the desired contradiction. Hence, $G$ is irreducible.
\end{proof}

\section{Acknowledgements}

The author would like to thank his PhD advisor Alexander Nabutovsky for insightful discussions and for suggestions to improve the exposition of this paper. This research was partially supported by an NSERC Vanier Scholarship.

\bibliographystyle{amsalpha}

\bibliography{../../bib/main}

\end{document}